\theoremstyle{usual}
\newtheorem{theorem}{Theorem}
\newtheoremstyle{likedef}
  {}%
  {}%
  {}%
  {\parindent}%
  {\bfseries}%
  {.}%
  {.5em}%
  {}%
\theoremstyle{likedef}
\numberwithin{equation}{section}
\begin{document}

\title{Upper bound on the expected size of intrinsic ball}

\author{Art\"{e}m Sapozhnikov\thanks{EURANDOM, P.O. Box 513, 5600 MB Eindhoven, The Netherlands. Email: sapozhnikov@eurandom.tue.nl;
Research partially supported by Excellence Fund Grant of TU/e of Remco van der Hofstad.}}
\date{}
\maketitle

\footnotetext{MSC2000: Primary 60K35, 82B43.}
\footnotetext{Keywords: Critical percolation; high-dimensional percolation; triangle condition; chemical distance, intrinsic ball.}

\begin{abstract}
We give a short proof of Theorem~1.2(i) from \cite{KN(IIC)}.
We show that the expected size of the intrinsic ball of radius $r$ is at most $Cr$ if
the susceptibility exponent $\gamma$ is at most $1$. In particular, this result follows if the so-called triangle condition holds.\\\\
\end{abstract}

Let $G=(V,E)$ be an infinite connected bounded degree graph.
We consider independent bond percolation on $G$.
For $p\in [0,1]$, each edge of $G$ is open with probability $p$ and closed with probability $1-p$ independently for distinct edges.
The resulting product measure is denoted by ${\mathbb P}_p$.
For two vertices $x,y\in V$ and an integer $n$, we write $x\leftrightarrow y$ if there is an open path from $x$ to $y$, and
we write $x\stackrel{\leq n}\longleftrightarrow y$ if there is an open path of at most $n$ edges from $x$ to $y$.
Let $C(x)$ be the set of all $y\in V$ such that $x\leftrightarrow y$.
For $x\in V$, the {\it intrinsic ball} of radius $n$ at $x$ is the set $B_I(x,n)$ of all $y\in V$ such that $x\stackrel{\leq n}\longleftrightarrow y$.
Let $p_c = \inf \{p~:~{\mathbb P}_p(|C(x)|=\infty)>0\}$ be the critical percolation probability.
Note that $p_c$ does not depend on a particular choice of $x\in V$, since $G$ is a connected graph.
For general background on Bernoulli percolation we refer the reader to \cite{Grimmett}.

In this note we give a short (and slightly more general) proof of Theorem~1.2(i) from \cite{KN(IIC)}.
\begin{theorem}\label{thmBI}
Let $x\in V$.
If there exists a finite constant $C_1$ such that ${\mathbb E}_{p}|C(x)| \leq C_1 (p_c - p)^{-1}$ for all $p<p_c$, then
there exists a finite constant $C_2$ such that for all $n$,
\[
{\mathbb E}_{p_c}|B_I(x,n)| \leq C_2 n.
\]
\end{theorem}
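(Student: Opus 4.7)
The plan is to reduce the bound at criticality to the hypothesis by comparing $\EE_{p_c}|B_I(x,n)|$ with $\EE_p|C(x)|$ for a well-chosen $p<p_c$. Since $\EE_{p_c}|B_I(x,n)| = \sum_{y\in V}\P_{p_c}(x\stackrel{\leq n}\longleftrightarrow y)$, the main task is to establish the pointwise inequality
\[
\P_p(x \leftrightarrow y) \;\geq\; (p/p_c)^n\, \P_{p_c}\bigl(x\stackrel{\leq n}\longleftrightarrow y\bigr)
\qquad (y\in V,\;p<p_c).
\]
Summing this over $y$ and invoking the hypothesis then gives $\EE_{p_c}|B_I(x,n)| \leq C_1 (p_c/p)^n (p_c-p)^{-1}$; the choice $p=p_c(1-1/n)$ will make $(p_c/p)^n=(1-1/n)^{-n}$ a bounded universal constant while $(p_c-p)^{-1}=n/p_c$, producing the desired $O(n)$ bound.

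To establish the pointwise inequality I would invoke the standard monotone coupling: assign to each edge $e$ an independent $U_e\sim\mathrm{Uniform}[0,1]$ and declare $e$ to be $p$-open iff $U_e\leq p$. Enumerate the (finitely many) self-avoiding paths from $x$ to $y$ of length at most $n$ as $\pi_1,\pi_2,\ldots$, and let $T$ be the smallest index $k$ such that $\pi_k$ is $p_c$-open, with $T=\infty$ otherwise. Then $\{T<\infty\}$ coincides with the event $\{x\stackrel{\leq n}\longleftrightarrow y\}$ at $p_c$, and $T$ is measurable with respect to the $p_c$-configuration only. If, additionally, every edge $e\in\pi_T$ satisfies $U_e\leq p$, then $\pi_T$ is $p$-open, so $x\leftrightarrow y$ at $p$.

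The crux is the distributional claim that, conditional on $\{T=k\}$, the variables $\{U_e:e\in\pi_k\}$ are iid $\mathrm{Uniform}[0,p_c]$, so the additional event $\{U_e\leq p\text{ for all }e\in\pi_k\}$ has conditional probability exactly $(p/p_c)^{|\pi_k|}\geq(p/p_c)^n$. This is where I expect the only subtlety: the constraint from $T=k$ forces each earlier path $\pi_j$ ($j<k$) to contain an edge with $U>p_c$; but since distinct self-avoiding paths from $x$ to $y$ cannot satisfy $\pi_j\subseteq\pi_k$ as edge sets, at least one such witnessing edge always lies in $\pi_j\setminus\pi_k$, so no nontrivial condition is imposed on $\{U_e:e\in\pi_k\}$ beyond $U_e\leq p_c$. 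Summing $(p/p_c)^{|\pi_k|}\P_{p_c}(T=k)$ over $k$ yields the pointwise inequality, and the remaining steps (summation over $y$ and optimization of $p$) are routine.
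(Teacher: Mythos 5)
Your proposal is correct and is essentially the paper's argument: the same coupling inequality ${\mathbb P}_p(x\leftrightarrow y)\geq (p/p_c)^n\,{\mathbb P}_{p_c}(x\stackrel{\leq n}\longleftrightarrow y)$, summation over $y$, and the choice $p=p_c(1-1/n)$ (the paper takes $p=p_c-1/n$), with small $n$ handled trivially via bounded degree. The only difference is in how the coupling inequality is verified: the paper uses two-stage thinning (percolate at $p_c$, then retain each open edge independently with probability $p/p_c$ and keep a shortest open path), which sidesteps your first-open-path conditioning, though your version with the nestedness observation for self-avoiding paths is also valid.
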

Before we proceed with the proof of this theorem, we discuss examples of graphs for which the assumption of Theorem~\ref{thmBI} is known to hold.
This assumption can be interpreted as the mean-field bound $\gamma \leq 1$, where $\gamma$ is the susceptibility exponent.
It is well known that for vertex-transitive graphs this assumption is satisfied if the triangle condition holds at $p_c$ \cite{AN84}: For $x\in V$,
\[
\sum_{y,z\in V} {\mathbb P}_{p_c}(x\leftrightarrow y) {\mathbb P}_{p_c}(y\leftrightarrow z) {\mathbb P}_{p_c}(z\leftrightarrow x) < \infty.
\]
This condition holds on certain Euclidean lattices \cite{HS,HHS} including the nearest-neighbor lattice $\mathbb Z^d$ with $d\geq 19$ and sufficiently spread-out lattices with $d>6$.
It also holds for a rather general class of non-amenable transitive graphs \cite{K,S1,S2,W}.
It has been shown in \cite{Ktot} that for vertex-transitive graphs, the triangle condition is equivalent to the open triangle condition.
The latter is often used instead of the triangle condition in studying the mean-field criticality.

\begin{proof}[Proof of Theorem~\ref{thmBI}]
Since $G$ is a bounded degree graph, it is sufficient to prove the result for $n\geq 2/p_c$.
Let $p<p_c$. We consider the following coupling of percolation with parameter $p$ and with parameter $p_c$.
First delete edges independently with probability $1-p_c$, then every present edge is deleted independently with probability $1 - (p/p_c)$.
This construction implies that for $x,y\in V$, $p<p_c$, and an integer $n$,
\[
{\mathbb P}_{p}(x\stackrel{\leq n}\longleftrightarrow y) \geq \left(\frac{p}{p_c}\right)^n {\mathbb P}_{p_c}(x\stackrel{\leq n}\longleftrightarrow y).
\]
Summing over $y\in V$ and using the inequality ${\mathbb P}_{p}(x\stackrel{\leq n}\longleftrightarrow y) \leq {\mathbb P}_p(x\leftrightarrow y)$, we obtain
\[
{\mathbb E}_{p_c}|B_I(x,n)| \leq \left(\frac{p_c}{p}\right)^n {\mathbb E}_p|C(x)|.
\]
The result follows by taking $p = p_c - \frac{1}{n}$.
\end{proof}

\textbf{Acknowledgements.} I would like to thank Takashi Kumagai for valuable comments and advice.

\end{document}